\theoremstyle{plain}
\newtheorem{theorem}[subsection]{{\bf Theorem}}
\newtheorem*{theorem*}{{\bf Theorem}}
\newtheorem{corollary}[subsection]{{\bf Corollary}}
\newtheorem*{corollary*}{{\bf Corollary}}
\newtheorem{proposition}[subsection]{{\bf Proposition}}
\newtheorem{lemma}[subsection]{{\bf Lemma}}
\theoremstyle{definition}
\theoremstyle{remark}
\numberwithin{equation}{subsection}
\DeclareBoldMathCommand{\bbot}{\bot}
\DeclareSymbolFont{cyrletters}{OT2}{wncyr}{m}{n}
\DeclareMathSymbol{\Sha}{\mathalpha}{cyrletters}{"58}
\DeclareMathOperator{\pwc}{pwc}
\DeclareMathOperator{\pwh}{pwh}
\begin{document}
\title[Powerful class]{The powerful class of Sylow subgroups of finite groups}
\author{Primo\v z Moravec}
\address{{
Faculty of  Mathematics and Physics, University of Ljubljana,
and Institute of Mathematics, Physics and Mechanics,
Slovenia}}
\email{primoz.moravec@fmf.uni-lj.si}
\subjclass[2020]{20D20, 20D15}
\keywords{Powerful class, transfer, fusion, $p$-solvable groups.}
\thanks{ORCID: \url{https://orcid.org/0000-0001-8594-0699}. The author acknowledges the financial support from the Slovenian Research Agency, research core funding No. P1-0222, and projects No. J1-3004, N1-0217, J1-2453, J1-1691.}
\date{\today}
\begin{abstract}
\noindent
The paper explores the effect of powerful class of Sylow $p$-subgroups of a given finite group on control of transfer or fusion. We also find an explicit bound for the $p$-length of a $p$-solvable group in terms of the poweful class of a Sylow $p$-subgroup. 
\end{abstract}
\maketitle
\section{Introduction}
\label{s:intro}

\noindent
The class of powerful $p$-groups was introduced by Lubotzky and Mann in their landmark paper \cite{LM87}. Powerful $p$-groups have found numerous applications in the quest of understanding the structure of finite $p$-groups. Mann \cite{Man95} also proved that if a finite group $G$ has a powerful Sylow $p$-subgroup $P$, then $N_G(P)$ controls transfer in $G$.

Related to being powerful is the notion of powerfully embedded subgroup of a finite $p$-group. Every finite $p$-group $P$ has the largest powerfully embedded subgroup $\eta(P)$ by \cite[Theorem 1.1]{LM87}. It is not difficult to see that $\eta$ is a positive characteristic $p$-functor. 

Our first main result is a variant of the second Gr\" un theorem for $\eta$:

\begin{theorem}
    \label{thm:2ndgrun}
    Let $G$ be a finite group and $P$ a Sylow $p$-subgroup of $G$. If $\eta(P)$ is weakly closed in $P$ with respect to $G$, then $P\cap G'=P\cap N_G(\eta (P))'$.
\end{theorem}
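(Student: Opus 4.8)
The plan is to prove the non-trivial inclusion $P\cap G'\subseteq P\cap N'$, writing $N:=N_G(\eta(P))$; the reverse inclusion $P\cap N'\subseteq P\cap G'$ is immediate from $N'\le G'$. Because $\eta$ is a positive characteristic $p$-functor, $\eta(P)$ is characteristic in $P$, so $P\le N$ and $P$ remains a Sylow $p$-subgroup of $N$. The one consequence of the weak-closure hypothesis I will use repeatedly is the following: any $G$-conjugate of $\eta(P)$ that is contained in $P$ already equals $\eta(P)$; hence if $g\in G$ and $\eta(P)^g\le P$, then $g\in N$.

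First I would pass to a fusion-theoretic formulation through the focal subgroup theorem, which yields
$$P\cap G'=\langle u^{-1}u^g : u,u^g\in P,\ g\in G\rangle,\qquad P\cap N'=\langle u^{-1}u^n : u,u^n\in P,\ n\in N\rangle.$$
It therefore suffices to show that each focal generator $u^{-1}u^g$ lies in $P\cap N'$. The gain from this reformulation is that once the conjugating element can be chosen in $N$, the generator $u^{-1}u^g=[u,g]$ lies in $N'$ automatically, as then $u,g\in N\ge P$. Thus the theorem reduces to the assertion that the $G$-fusion of elements of $P$ is realised, modulo $P\cap N'$, inside $N$.

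To establish this I would run an Alperin-type reduction: the conjugation $u\mapsto u^g$ factors as a chain of conjugations by elements $c_i\in N_G(Q_i)$, where each $Q_i$ is a tame intersection $P\cap P^{x_i}$ and every intermediate element remains in $P$. Each step contributes a commutator $[u_{i-1},c_i]$, so the generator lands in $P\cap N'$ as soon as each $c_i$ can be taken in $N$. Here the weak closure of $\eta(P)$ does the decisive work: whenever $\eta(P)\le Q_i$ one has $\eta(P)^{c_i}\le Q_i^{c_i}=Q_i\le P$ (as $c_i$ normalises $Q_i$), and the observation above forces $c_i\in N$. \textbf{The main obstacle is precisely to arrange that the subgroups $Q_i$ carrying the essential fusion contain $\eta(P)$.} This is exactly the point at which pure weak closure is insufficient — the choice $W=P$, which is always weakly closed, shows for a non-powerful $P$ (e.g. a Sylow $2$-subgroup of $S_4$, where $P$ is self-normalising yet $P\cap G'>P\cap N_G(P)'$) that $N_G(W)$ can fail to control transfer — so the \emph{powerfully embedded} nature of $\eta(P)$, together with the functoriality of $\eta$, must enter here: the aim is to show that the subgroups controlling the fusion necessarily lie above $\eta(P)$, the borderline case $\eta(P)=P$ (so that $P$ is powerful) being Mann's theorem that a powerful Sylow subgroup has its transfer controlled by its normaliser. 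I expect this containment step, rather than the transfer bookkeeping, to be the heart of the argument.
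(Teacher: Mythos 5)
There is a genuine gap, and you have located it yourself: everything in your outline up to the admitted ``main obstacle'' is routine (the focal subgroup reduction, the observation that weak closure converts $\eta(P)^g\le P$ into $g\in N$, the $S_4$ sanity check showing weak closure alone cannot suffice), while the step that would actually prove the theorem --- getting the conjugators into $N=N_G(\eta(P))$ --- is left as a hope. Worse, the specific plan you propose for filling it, namely arranging that the Alperin tame intersections $Q_i$ carrying the essential fusion \emph{contain} $\eta(P)$, is not the mechanism that works: nothing in the hypotheses forces an essential tame intersection to lie above a fixed characteristic subgroup of $P$, and the paper never proves, nor needs, any such containment. So the proposal is a correct strategic frame around a missing core, and the missing core is aimed at a containment statement that is stronger than what powerful embedding delivers.

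The paper's route is different at exactly this point. It invokes Gr\"un's \emph{first} theorem, $P\cap G'=\langle P\cap N_G(P)',\ P\cap (P')^g \mid g\in G\rangle$, reducing the problem to showing $T:=P\cap (P')^g\subseteq N'$ for each $g$ (the term $P\cap N_G(P)'$ is harmless since $N_G(P)\le N$). The powerfully embedded property of $W=\eta(P)$ then enters through the filtration $W\ge W^p\ge W^{p^2}\ge\cdots\ge 1$, which satisfies $[P,W^{p^{i-1}}]\le W^{p^i}$: a minimal-counterexample descent along this chain (take the least $i$ with $W^{p^i}\subseteq N_G(T)$ and derive a contradiction if $i>0$) shows $W\subseteq N_G(T)$, and symmetrically $W^g\subseteq N_G(T)$. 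That is, $\eta(P)$ is placed in the \emph{normalizer} of the relevant intersection, not in the intersection itself --- precisely the weaker statement that powerful embedding can deliver and that your containment plan overshoots. Sylow's theorem inside $N_G(T)$ then yields $n\in N_G(T)$ such that $W$ and $W^{gn^{-1}}$ lie in a common Sylow $p$-subgroup of $G$, weak closure forces $W^{gn^{-1}}=W$, so $gn^{-1}\in N$ and $T=T^{n^{-1}}\le (P')^{gn^{-1}}\le N'$. Note that this is a single global application of weak closure, with no Alperin chain at all: to complete your outline you would need an analogue of the normalization step (e.g.\ that $\eta(P)$ normalizes each $P\cap (P')^g$), and once you have that, Gr\"un's theorem makes the step-by-step fusion decomposition unnecessary.
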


A direct consequence of Theorem \ref{thm:2ndgrun} is that, under the assumptions of the theorem, the functor $\eta$ controls transfer in $G$ (Corollary \ref{cor:etatransfer}). In Section \ref{s:local} we also indicate when $\eta$ strongly controls fusion in $G$.

The second part of the paper deals with the influence of the powerful class of a Sylow $p$-subgroup on the structure of a given finite group. The notion of powerful class $\pwc P$ of a finite $p$-group $P$ was first defined by Mann \cite{Man11}, and further explored in \cite{Mor24}. The group $P$ is powerful if and only if $\pwc P\le 1$. Powerful class is a refinement of the nilpotency class, as the nilpotency class of $P$ provides an upper bound for $\pwc P$. Finite $p$-groups $P$ with $\pwc P<p$ are called the {\it $p$-groups of small powerful class}. These groups have well-behaved power structure \cite{Man11}. We show that finite $p$-groups of small powerful class also behave well as Sylow $p$-subgroups of a finite group. More precisely, if a finite group $G$ has a Sylow $p$-subgroup $P$ of small powerful class, then $N_G(P)$ controls transfer in $G$ (Proposition \ref{prop:Ngtransfer}). This generalizes the main result of \cite{Man95} mentioned above. 

Focusing on $p$-solvable groups $G$, we consider how the powerful class of a Sylow $p$-subgroup $P$ affects the $p$-length $\ell_p(G)$ of $G$. Our starting point is a result of Khukhro \cite[Theorem 4.1]{Khu12} showing that if $P$ is powerful, then $\ell_p(G)\le 1$. We generalize this as follows.

\begin{theorem}
    \label{thm:plength}
    Let $G$ be a finite $p$-solvable group and $P$ a Sylow $p$-subgroup of $G$. 
    \begin{enumerate}
        \item If $p>2$, then $\ell_p(G)\le \left\lceil \pwc P / (p-2)\right\rceil$.
        \item If $p=2$, then $\ell_2(G)\le \pwc P$.
    \end{enumerate}
\end{theorem}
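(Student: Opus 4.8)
The plan is to recast both bounds as a single lower bound on the powerful class and to prove that by induction, peeling off one $p$-layer at a time. Writing $\ell=\ell_p(G)$, an elementary manipulation of the ceiling shows that part (1) is equivalent to $\pwc P\ge (p-2)(\ell-1)+1$ and part (2) to $\pwc P\ge \ell$; indeed $\lceil a/(p-2)\rceil\ge \ell$ holds precisely when $a\ge (p-2)(\ell-1)+1$. So I would prove these inequalities by induction on $|G|$.

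For the reductions, since $\ell_p(G)=\ell_p(G/O_{p'}(G))$ and $PO_{p'}(G)/O_{p'}(G)\cong P$, I may assume $O_{p'}(G)=1$; then $V:=O_p(G)\ne 1$ as soon as $\ell\ge 1$, and $C_G(V)\le V$ by the Hall--Higman lemma. Passing to $\bar G:=G/V$ lowers the $p$-length by one and replaces $P$ by the \emph{quotient} $\bar P=P/V$. The one delicate point in the bookkeeping is that every reduction must stay within quotients of $P$ and never pass to an arbitrary subgroup: quotients of powerful groups are powerful, so $\pwc\bar P\le \pwc P$ is available, whereas subgroups of powerful groups need not be powerful, so $\pwc$ is not monotone under subgroups. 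The base cases $\ell\le 1$ are immediate: $\ell=0$ forces $P=1$, and $\ell=1$ only requires $\pwc P\ge 1$, i.e. $P\ne 1$.

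The inductive step, and the heart of the proof, is the following cost estimate: if $O_{p'}(G)=1$ and $\ell\ge 2$, then $\pwc P\ge \pwc(P/V)+(p-2)$ for $p$ odd and $\pwc P\ge \pwc(P/V)+1$ for $p=2$, where $V=O_p(G)$. Granting it, the induction closes at once, and for powerful $P$ (that is $\pwc P\le 1$) it recovers Khukhro's bound $\ell_p(G)\le 1$. To prove the estimate I would first locate the coprime action hidden in the configuration: since $O_p(\bar G)=1$ and $\ell_p(\bar G)=\ell-1\ge 1$, the subgroup $Q:=O_{p'}(\bar G)$ is a nontrivial $p'$-group with $C_{\bar G}(Q)\le Q$, and above it $O_p(\bar G/Q)\ne 1$; thus a suitable $p$-element $x\in P$ acts faithfully and coprimely on a chief factor inside $Q$. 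By Hall--Higman's Theorem B this action has minimal polynomial of degree at least $p-1$, so the iterated commutators of the factor with $x$ survive through weight $p-2$. This is the source of the constant $p-2$.

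The decisive and most technical step --- the one I expect to be the main obstacle --- is to transport this non-vanishing, which a priori lives in the $p'$-section $Q$, into a genuine lower bound on the powerful-class filtration of $P$ itself. Here I would invoke the power-commutator machinery for groups of small powerful class developed in \cite{Man11,Mor24} to argue that the presence of $V=O_p(G)$ together with an element acting as above forces the powerful series of $P$ to descend at least $p-2$ steps more slowly than that of $P/V$. Reconciling the coprime linear information coming from Hall--Higman with the internal power/commutator filtration that measures $\pwc$ is where essentially all the work sits. For $p=2$ the same skeleton applies, but Hall--Higman's estimate degrades in its exceptional cases and the definition of powerful $2$-groups involves fourth powers; together these weaken the gain to a single step per layer, which is exactly the weaker bound $\ell_2(G)\le \pwc P$.
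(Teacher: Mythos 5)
Your reformulation of the bounds (part (1) as $\pwc P\ge (p-2)(\ell-1)+1$, part (2) as $\pwc P\ge \ell$) and the layer-peeling induction are sound, and they match the paper's skeleton: the paper likewise quotients by $O_{p'p}(G)$ one layer at a time, using $\ell_p(G)=\ell_p(G/O_{p'p}(G))+1$ together with $\pwc(PO_{p'p}(G)/O_{p'p}(G))\le \pwc(P/\eta_{p-2}(P))\le \pwc P-(p-2)$. Your ``cost estimate'' $\pwc P\ge \pwc(P/V)+(p-2)$ is also true --- it is equivalent to the paper's Corollary~\ref{cor:etap-2}, which says $\eta_{p-2}(P)\subseteq O_{p'p}(G)=V$. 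But this estimate is the entire content of the theorem, and you do not prove it. The paper establishes it in two concrete steps that your proposal lacks: (i) a purely $p$-group-theoretic computation (Proposition~\ref{prop:PFtypep-2}) showing that any $N\trianglelefteq P$ with $\pwh_P N\le p-2$ is PF-embedded of type $p-2$, via the explicit potent filtration $M_1=N$, $M_{i+1}=M_i^p N_{p-i-2}$ built from an $\eta$-series of $N$ (with Lemma~\ref{lem:pwhNp} supplying $[K,{}_{p-2}P]\le K^p$ and hence the power-commutator identities of \cite{FGJ08}); and (ii) the theorem of Gonz\'alez-S\'anchez and Spagnuolo (Lemma~\ref{lem:GSS}) that PF-embedded subgroups of type $p-2$ (type $1$ when $p=2$) lie in $O_{p'p}(G)$. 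Your plan to ``invoke the power-commutator machinery of \cite{Man11,Mor24}'' names no statement performing this transport, and those papers contain none; the potent-filtration construction is precisely the new work done here.

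Moreover, your Hall--Higman input is misapplied. You propose a $p$-element acting faithfully and coprimely on a chief factor inside $Q=O_{p'}(\bar G)$ and quote Theorem B for a minimal polynomial of degree at least $p-1$; but Theorem B concerns $p$-elements acting on modules in characteristic $p$, whereas a coprime action of a $p$-element on a $p'$-chief factor is semisimple and yields no such degree bound. In the correct version of this argument (Khukhro's, reproduced as Proposition~\ref{prop:peO}) the characteristic-$p$ module is $V=O_p(G)$ itself: one produces $s\in S$ of order $p$ whose action on $V$ has a Jordan block of size $p$, hence $w\in V$ with $[w,{}_{p-1}s]\ne 1$, contradicting $y\in Z_2(P)$. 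Even with that repaired, such an argument only handles the powerfully embedded case $\pwh_P N\le 1$; extracting a gain of $p-2$ steps per layer (and the single step for $p=2$, where the paper uses that $N\ge N^2\ge N^4\ge\cdots$ is a potent filtration of type $1$) is exactly what steps (i) and (ii) above accomplish, and nothing in your proposal substitutes for them.
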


Note that Theorem \ref{thm:plength} also refines and improves the classical result that $\ell_p(G)$ is bounded from above by the nilpotency class of $P$.

\section{Preliminaries}
\label{s:prelim}

\noindent
A normal subgroup $N$ of a finite $p$-group $P$ is {\it powerfully embedded} in $P$ if $[N,P]\le N^{2p}$. If $P$ is powerfully embedded in itself, we say that $P$ is a {\it powerful $p$-group}. 

Denote by $\eta(P)$ the largest powefully embedded subgroup of $P$, that is, the product of all powerfully embedded subgroups of $P$. Clearly, $\eta (P)$ is a characteristic subgroup of $P$ and contains the center $Z(P)$ of $P$.

According to Mann \cite{Man11}, an ascending series
$1=N_0\le N_1\le N_2\le \cdots$
of normal subgroups of $P$ is said to be an {\it $\eta$-series} if $N_{i+1}/N_i$ is powerfully embedded in $P/N_i$ for all $i$. The shortest length $k$ of an $\eta$-series with $N_k=N$ is called the {\it powerful height} of $N$. It is denoted by $\pwh_P(N)$. 

The number $\pwc(P)=\pwh_P(P)$ is called the {\it powerful class} of $P$. It is easy to see that if $K\triangleleft P$, then $\pwc(P/K)\le \pwc P$.

The {\it upper $\eta$-series of $P$} is defined by $\eta_0(P)=1$ and $$\eta_{i+1}(P)/\eta_i(P)=\eta(P/\eta_i(P))$$
for $i\ge 0$. 
 It is straightforward to see \cite[Proposition 2.1]{Mor24} that the upper $\eta$-series of $P$ is the fastest growing $\eta$-series in $P$.

A group $P$ is said to have {\it small powerful class} if $\pwc(P)<p$. Similarly, a normal subgroup $N$ of $P$ has {\it small powerful height} if $\pwh_P(N)<p$.

\begin{lemma}
    \label{lem:pwh}
    Let $p>2$ and let $M$ and $N$ be normal subgroups of a finite $p$-group $P$.
    \begin{enumerate}
        \item $\pwh_P[N,P]\le \pwh_PN$,
        \item $\pwh_P N^p\le \pwh_PN$,
        \item $\pwh_P MN\le\max\{\pwh_PM,\pwh_PN\}$.
    \end{enumerate}
\end{lemma}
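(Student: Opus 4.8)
The plan is to prove each inclusion by applying the relevant operation termwise to a shortest $\eta$-series of the subgroup(s) involved, and then to recognize the resulting chain as an $\eta$-series. Throughout I will use the basic closure properties of powerfully embedded subgroups from \cite{LM87}: if $A$ is powerfully embedded in $P$, then so are $[A,P]$ and $A^p$; the product of two powerfully embedded subgroups is again powerfully embedded; and powerful embedding is inherited by quotients, in the sense that if $A/B$ is powerfully embedded in $P/B$ and $B\le C\triangleleft P$, then $AC/C$ is powerfully embedded in $P/C$. Since $p>2$, I will also freely use $N^{2p}=N^p$, so that powerful embedding of $A/B$ in $P/B$ reads $[A,P]\le A^pB$.

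For part (3), fix $k=\max\{\pwh_P M,\pwh_P N\}$ and choose shortest $\eta$-series $1=M_0\le\cdots\le M_k=M$ and $1=N_0\le\cdots\le N_k=N$, padding the shorter one by repetition. Put $L_i=M_iN_i$, so that $L_0=1$ and $L_k=MN$. Mapping the powerfully embedded factor $M_{i+1}/M_i\le P/M_i$ into $P/L_i$ (legitimate since $M_i\le L_i$) shows that $M_{i+1}L_i/L_i$ is powerfully embedded in $P/L_i$, and likewise for $N_{i+1}L_i/L_i$; their product is $L_{i+1}/L_i$, which is therefore powerfully embedded in $P/L_i$. Hence $\{L_i\}$ is an $\eta$-series and $\pwh_P MN\le k$, as required. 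This part uses only the product and quotient closure, with no commutator calculus.

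For parts (1) and (2), start again from a shortest $\eta$-series $1=N_0\le\cdots\le N_k=N$ with $k=\pwh_P N$, and form the candidate chains $L_i=[N_i,P]$ for (1) and $L_i=N_i^p$ for (2). In both cases $L_0=1$, $L_k$ is the target subgroup, and the chain is ascending and normal, so everything reduces to the single assertion that each $L_{i+1}/L_i$ is powerfully embedded in $P/L_i$. Using the regularity of the powerfully embedded factor $N_{i+1}/N_i$ in $P/N_i$ — concretely, that the $p$-th power map and the map $[-,P]$ are compatible modulo $N_i$, a fact resting on the Hall--Petrescu collection formula together with the well-behaved power structure of powerfully embedded subgroups at odd primes — one obtains the required commutator bounds modulo $N_i$, namely $[N_{i+1},P,P]\le[N_{i+1},P]^pN_i$ for (1) and $[N_{i+1}^p,P]\le N_{i+1}^{p^2}N_i$ for (2).

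The main obstacle is precisely the passage from these estimates ``modulo $N_i$'' to the estimates ``modulo $L_i$'' that the $\eta$-series condition demands, i.e.\ replacing the lower term $N_i$ by its image $[N_i,P]$, respectively $N_i^p$. I would close this gap by induction on $k$: peeling off the powerfully embedded bottom term $N_1$ and passing to $P/N_1$, the image of $N$ has powerful height at most $k-1$, so the inductive hypotheses for (1) and (2) apply there; transferring back along the regularity properties of the powerfully embedded subgroup $N_1$ then shows that the defect already lies in $L_i$ rather than merely in $N_i$. It is exactly here that the odd-prime hypothesis is indispensable — for $p=2$ one must work with $N^4$ and the congruences above break down — which is mirrored by the separate, weaker bound recorded for $p=2$ in Theorem \ref{thm:plength}.
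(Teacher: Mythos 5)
Your part (3) is correct and is essentially the paper's own argument: the paper likewise pads the shorter $\eta$-series and verifies directly that $[M_{i+1}N_{i+1},P]\le (M_{i+1}N_{i+1})^p(M_iN_i)$, so the termwise products $M_iN_i$ form an $\eta$-series; your reformulation via products of powerfully embedded subgroups of $P/L_i$ is the same computation. Note, however, that the paper does not prove (1) and (2) at all --- it cites \cite[Lemma 2.5]{Man11} --- so for those items you are attempting more than the paper does, and the attempt has a genuine hole.

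Concretely: your intermediate bounds are fine, since applying Lubotzky--Mann in $P/N_i$ (where $N_{i+1}/N_i$ is powerfully embedded, hence so are its commutator with $P$ and its $p$-th power) gives $[N_{i+1},P,P]\le [N_{i+1},P]^pN_i$ and $[N_{i+1}^p,P]\le N_{i+1}^{p^2}N_i$. But the entire content of (1) and (2) is the replacement of the error term $N_i$ by $L_i=[N_i,P]$ (resp.\ $N_i^p$), and your proposed induction does not achieve this. Peeling off $N_1$ and applying the inductive hypothesis in $P/N_1$ bounds $\pwh_{P/N_1}\bigl([N,P]N_1/N_1\bigr)$, hence, after prepending $N_1$, bounds $\pwh_P\bigl([N,P]N_1\bigr)$ --- but not $\pwh_P[N,P]$, because powerful height is not monotone under passage to subgroups (a normal subgroup contained in a powerfully embedded subgroup need not itself be powerfully embedded, nor have small height). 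The sentence ``transferring back along the regularity properties of $N_1$ then shows that the defect already lies in $L_i$'' is precisely the missing argument, and the Hall--Petrescu formula as invoked does not supply it: what one would need is a commutator--power exchange of the shape $[K^p,P]\le [K,P]^p\cdot(\text{terms already in }L_i)$, and in this paper such an identity (via \cite[Proposition 3.2]{FGJ08}) is only available under the strong hypothesis $[K,{}_{p-2}P]\le K^p$, which you do not have at this point. As written, items (1) and (2) remain unproved; the honest options are to cite \cite[Lemma 2.5]{Man11} as the paper does, or to reproduce Mann's actual argument rather than this sketch.
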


\begin{proof}
    Items (1) and (2) are proved in \cite[Lemma 2.5]{Man11}. To prove (3), take the $\eta$-series $1=N_0\le N_1\le\cdots \le N_k=N$ and $1=M_0\le M_1\le\cdots \le M_\ell=M$ of $N$ and $M$ in $P$, respectively. Without loss of generality we may assume that $k\ge \ell$, and we put $M_i=M$ for $i\ge \ell$. Then 
    $[M_{i+1}N_{i+1},P]=[M_{i+1},P][N_{i+1},P]\le M_{i+1}^pM_iN_{i+1}^pN_i\le (M_{i+1}N_{i+1})^p(M_iN_i)$, therefore $1=M_0N_0\le M_1N_1\le\cdots\le M_kN_k=MN$ is an $\eta$-series of $MN$ in $P$.
\end{proof}

\begin{lemma}
    \label{lem:pwhNp}
    Let $p$ be odd, and let $N$ be a normal subgroup of a finite $p$-group $P$. If $\pwh_PN\le j$, then $[N,{}_jP]\le N^p$.
\end{lemma}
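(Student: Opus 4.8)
The plan is to induct on $j$, establishing the statement simultaneously for all finite $p$-groups $P$ and all normal subgroups $N$ with $\pwh_P N\le j$. The case $j=0$ forces $N=1$, where there is nothing to prove, so I would assume $j\ge 1$ and fix an $\eta$-series $1=N_0\le N_1\le\cdots\le N_j=N$ realizing $\pwh_P N\le j$.

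The key manoeuvre is to quotient out the \emph{bottom} term $N_1$ rather than the top one. Passing to $\bar P=P/N_1$, the truncated series $N_1/N_1\le N_2/N_1\le\cdots\le N_j/N_1$ is an $\eta$-series of $\bar N=N/N_1$ in $\bar P$ of length $j-1$, so $\pwh_{\bar P}\bar N\le j-1$. The induction hypothesis then gives $[\bar N,{}_{j-1}\bar P]\le\bar N^p$. Since both commutators and $p$-th powers pass to quotients, and $\bar N^p=N^pN_1/N_1$, lifting this inclusion back to $P$ yields $[N,{}_{j-1}P]\le N^pN_1$; here the factor $N_1$ is the unavoidable error introduced by the quotient.

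It then remains to commute once more with $P$ and to absorb this error. Using normality to split the commutator, $[N,{}_jP]=[[N,{}_{j-1}P],P]\le[N^pN_1,P]=[N^p,P][N_1,P]$. The first factor satisfies $[N^p,P]\le N^p$ because $N^p$ is characteristic in the normal subgroup $N$, hence normal in $P$. For the second factor, $N_1=N_1/N_0$ is powerfully embedded in $P$, so $[N_1,P]\le N_1^{2p}\le N_1^p\le N^p$, where the middle inclusion holds because every $2p$-th power is a $p$-th power and the last because $N_1\le N$. Combining the two inclusions gives $[N,{}_jP]\le N^p$, which closes the induction.

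The only genuinely delicate point is the bookkeeping in the last step: one must verify that the expansion $[N^pN_1,P]=[N^p,P][N_1,P]$ is legitimate, which I expect to follow from the normality of both $N^p$ and $N_1$ in $P$. Everything else is a routine transport of information through the quotient $P/N_1$, together with the observation that the single extra commutation with $P$ is precisely what is needed to swallow the error term $N_1$ into $N^p$.
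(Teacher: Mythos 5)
Your proof is correct, and it is the mirror image of the paper's induction rather than a copy of it. The paper peels off the \emph{top} term of the $\eta$-series: from the powerful embedding of $N/N_j$ in $P/N_j$ it gets $[N,P]\le N^pN_j$ in one step, then applies the induction hypothesis to the subgroup $N_j$ \emph{inside the same group} $P$ (using $\pwh_PN_j\le j$) and absorbs the error via $[N_j,{}_jP]\le N_j^p$, so that $[N,{}_{j+1}P]\le [N^p,{}_jP][N_j,{}_jP]\le N^p$. You instead peel off the \emph{bottom} term $N_1$ and invoke the induction hypothesis in the quotient $P/N_1$ --- which is why you rightly quantify the statement over all pairs $(P,N)$ --- then kill the resulting error term $N_1$ by one extra commutation, using the definitional inequality $[N_1,P]\le N_1^{2p}\le N_1^p\le N^p$ rather than an inductive one. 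The point you flagged as delicate is indeed fine: with $C=P$ and $N^p,N_1\triangleleft P$ (the former characteristic in $N$), the identity $[N^pN_1,P]=[N^p,P][N_1,P]$ is the standard product formula, and the paper itself uses the same identity, iterated, in its version. Your transport of the series through the quotient is also legitimate, since $(N_{i+1}/N_1)/(N_i/N_1)$ being powerfully embedded in $(P/N_1)/(N_i/N_1)$ is exactly the condition that $N_{i+1}/N_i$ be powerfully embedded in $P/N_i$ under the natural isomorphism; one small bookkeeping item you leave implicit is that when $\pwh_PN<j$ the series must be padded to length $j$ (repeating a term, whose trivial quotient is trivially powerfully embedded), which is harmless and is the same device the paper uses in Lemma \ref{lem:pwh}. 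Comparing the two: the paper's top-down version never leaves $P$ and needs no quotient bookkeeping, so it is marginally leaner; your bottom-up version has the mild advantage that the absorption step is completely mechanical, resting only on the definition of powerfully embedded applied to $N_1$ instead of on the inductive inequality.
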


\begin{proof}
    This is true for $j=1$, as $\pwh_PN=1$ implies that $N$ is powerfully embedded in $P$. Assume the claim holds for some $j\ge 1$. Let $\pwh_PN=j+1$, and let
    $1=N_0\le N_1\le \cdots \le N_{j+1}=N$ be an $\eta$-series in $P$. Then $[N,P]\le N^pN_j$. As $\pwh_PN_j\le j$, we have that $[N_j,{}_jP]\le N_j^p$. Thus
    $[N,{}_{j+1}P]=[N^p,{}_jP][N_j,{}_jP]\le N^pN_j^p=N^p$.
\end{proof}

Other notions are standard \cite{HupI,HBIII}.
\section{Local control}
\label{s:local}

\noindent
In this section we prove the results related to the local control. At first we note the following:

\begin{lemma}
    \label{lem:cpwrcp}
    Let $p$ be a prime and $P=C_p\wr C_p$. Then $\eta_i(P)=Z_i(P)$ for all $i\ge 0$.
\end{lemma}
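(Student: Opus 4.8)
The plan is to realize $P=C_p\wr C_p$ concretely and read off both series from the module structure of the base. Write $P=B\rtimes\langle t\rangle$, where the base $B$ is elementary abelian of rank $p$ and $t$ permutes its $p$ coordinates cyclically. Regarded as a module over $\langle t\rangle$, $B$ is the regular representation $\mathbb{F}_p[u]/(u^p)$ with $t$ acting as multiplication by $1+u$; I set $B_k=u^kB$, so that $B=B_0\supset B_1\supset\cdots\supset B_p=1$ with $\dim B_k=p-k$. A direct computation with $u$ gives $[P,B]=[t,B]=uB=B_1$ and, more generally, that the kernel of $u$ on any quotient $B/B_k$ is one-dimensional. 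From this I would identify the upper central series as $Z_i(P)=u^{\,p-i}B=B_{p-i}$ for $1\le i\le p-1$ and $Z_p(P)=P$; in particular $Z(P/B_{p-i})=B_{p-i-1}/B_{p-i}$.

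I would then prove $\eta_i(P)=Z_i(P)$ by induction on $i$, the case $i=0$ being trivial. Assuming $\eta_i(P)=Z_i(P)=:Z_i$, the definition of the upper $\eta$-series gives $\eta_{i+1}(P)/Z_i=\eta(Q)$ where $Q=P/Z_i$, and $Z(Q)=Z_{i+1}/Z_i$; since $Z(Q)\le\eta(Q)$ always holds, it suffices to prove $\eta(Q)\le Z(Q)$, i.e. that every powerfully embedded $N\trianglelefteq Q$ is central. The engine of the argument is the power map: for $g=bt^{j}$ with $p\nmid j$ one has $g^{p}=\big(\sum_{k=0}^{p-1}(1+u)^{jk}\big)b=u^{p-1}b$, using the identity $\sum_{k=0}^{p-1}x^{jk}=(x^{j}-1)^{p-1}$ in $\mathbb{F}_p[x]$ together with $(1+u)^{j}-1=u\cdot(\text{unit})$. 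Hence every $p$-th power of $Q$ lies in $u^{p-1}\cdot(\text{base})\subseteq Z(Q)$.

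Now two cases. If $i\ge 1$ then the base of $Q$ is $B/B_{p-i}\cong\mathbb{F}_p[u]/(u^{p-i})$ with $p-i\le p-1$, so $u^{p-1}=0$ there and the formula above shows $Q$ has exponent $p$; thus $N^{2p}=1$ for every $N$, and powerful embedding forces $[N,Q]\le N^{2p}=1$, i.e. $N\le Z(Q)$. If $i=0$, so $Q=P$, I distinguish whether $N\le B$: if $N\le B$ then $N$ is elementary abelian, $N^{2p}=1$, and $N$ is again central; if $N\not\le B$ then $NB=P$ (as $P/B\cong C_p$), whence $[N,P]\supseteq[N,B]=[P,B]=B_1$ has dimension $p-1$, while $N^{2p}\le P^{p}\le u^{p-1}B=Z(P)$ has dimension $1$. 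Powerful embedding would force $B_1\le Z(P)$, which is impossible for $p\ge3$ by comparing dimensions; this rules out the case $N\not\le B$ and completes the step.

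The one genuinely delicate point is the boundary prime $p=2$ (where $P=D_8$): here $i=0$ gives $\dim B_1=\dim Z(P)=1$, so the dimension count is inconclusive. I would close this case by noting that $P^2\le Z(P)$ is elementary abelian, whence $N^{2p}=N^4\le P^4=(P^2)^2=1$, so again $[N,P]=1$ and $N$ is central; the remaining step $i=1$ is immediate since $P/Z_1\cong C_2\times C_2$ is abelian. Apart from this endpoint, the main thing to get right is the exact power formula $g^p=u^{p-1}b$ and the observation that it degenerates to exponent $p$ precisely once one factor of the base has been divided out, which is what makes the upper $\eta$-series track the upper central series step for step.
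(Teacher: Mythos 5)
Your proof is correct, but it takes a genuinely different route from the paper's. The paper disposes of the lemma in two lines: for $p=2$ it declares the case a straightforward verification, and for $p>2$ it observes that $C_p\wr C_p$ is a $p$-group of maximal class and invokes \cite[Corollary 5.8]{Mor24}, which settles the identification $\eta_i=Z_i$ for maximal class groups in general. You instead compute everything inside the wreath product: the identification of the base with $\mathbb{F}_p[u]/(u^p)$, the polynomial identity $\sum_{k=0}^{p-1}y^k=(y-1)^{p-1}$ over $\mathbb{F}_p$ giving the power formula $(bt^j)^p=(\mathrm{unit})\cdot u^{p-1}b$ for $p\nmid j$, and the two consequences you extract from it --- $P^p\le Z(P)=B_{p-1}$, and exponent $p$ for every quotient $P/Z_i$ with $i\ge 1$ --- are all accurate; so is the dimension count $\dim B_1=p-1>1=\dim Z(P)$ that rules out non-central powerfully embedded subgroups at the bottom level when $p\ge 3$ (note $N^{2p}=N^p$ for odd $p$, which your containments use implicitly), and your separate treatment of the endpoint $P=D_8$, where every subgroup satisfies $N^4=1$ so powerfully embedded forces central, is exactly the ``straightforward verification'' the paper omits. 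Two harmless imprecisions worth noting: $g^p$ equals $u^{p-1}b$ only up to a unit scalar (irrelevant for subgroup containments, and your derivation makes this clear), and the description $Z_i(P)=B_{p-i}$ via the kernel of $u$ requires $p-i\ge 2$, with $Z_p(P)=P$ read off from $P/B_1$ abelian, which you also handle. As for what each approach buys: the paper's citation is maximally short and places the lemma inside a general structural theorem covering all maximal class groups; yours is elementary and self-contained, and it makes the mechanism visible --- the upper $\eta$-series tracks the upper central series step for step precisely because all $p$-th powers of $P$ are already central and die after one central quotient, so beyond the first step ``powerfully embedded'' degenerates to ``central''.
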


\begin{proof}
    If $p=2$, this is straightforward to verifiy. Assume $p>2$. Note that $P$ is a $p$-group of maximal class. The result now follows immediately from \cite[Corollary 5.8]{Mor24}.
\end{proof}

This provides our first results on control of transfer:

\begin{proposition}
    \label{prop:Ngtransfer}
    Let $G$ be a finite group and $P$ a Sylow $p$-subgroup of $G$. If $P$ has small powerful class, then $N_G(P)$ controls transfer in $G$.
\end{proposition}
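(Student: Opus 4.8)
The plan is to reduce the statement to a classical transfer-control criterion and then supply the missing $p$-group input through Lemma~\ref{lem:cpwrcp}. Recall Yoshida's transfer theorem: if a Sylow $p$-subgroup $P$ of $G$ does not \emph{involve} the wreath product $C_p\wr C_p$, i.e.\ no section of $P$ is isomorphic to $C_p\wr C_p$, then $N_G(P)$ controls transfer in $G$. So it is enough to prove the purely local assertion: \emph{if $\pwc P<p$, then $P$ has no section isomorphic to $C_p\wr C_p$}. This is where the hypothesis of small powerful class must be used, and it is also what singles out $C_p\wr C_p$ as the natural obstruction.

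First I would pin down the powerful class of the obstruction itself. By Lemma~\ref{lem:cpwrcp} the upper $\eta$-series of $W:=C_p\wr C_p$ coincides with its upper central series; since $W$ has maximal class and order $p^{p+1}$, its nilpotency class is $p$, whence $\pwc W=\pwh_W(W)=p$. Thus $W$ sits exactly at the threshold $\pwc=p$ and does \emph{not} have small powerful class. Now suppose, for contradiction, that $P$ had a section $S\cong W$. The quotient direction of monotonicity is free: the bound $\pwc(P/K)\le\pwc P$ recorded after the definition of powerful class handles passage to $G$-quotients. Combining this with subgroup behaviour would give $\pwc W=\pwc S\le\pwc P<p$, contradicting $\pwc W=p$; hence no such section exists and Yoshida's theorem yields the claim.

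I expect the main obstacle to be precisely the last point: controlling $\pwc$ under passage to \emph{subgroups}. Unlike the quotient bound, subgroup monotonicity of powerful class is not formal — subgroups of powerful groups need not be powerful — so I would either establish it under the extra hypothesis $\pwc<p$, exploiting the well-behaved power-commutator structure of $p$-groups of small powerful class together with Lemmas~\ref{lem:pwh} and~\ref{lem:pwhNp}, or bypass it altogether by an induction on $|G|$ built on Theorem~\ref{thm:2ndgrun}. In the latter route one verifies that small powerful class forces $\eta(P)$ to be weakly closed in $P$ with respect to $G$ (again ruling out the $C_p\wr C_p$ configuration via Lemma~\ref{lem:cpwrcp}), applies Theorem~\ref{thm:2ndgrun} to get $P\cap G'=P\cap N_G(\eta(P))'$, and then descends: if $N_G(\eta(P))<G$ one recurses inside it, noting $N_{N_G(\eta(P))}(P)=N_G(P)$, while if $\eta(P)\trianglelefteq G$ one passes to $G/\eta(P)$, where the Sylow subgroup $P/\eta(P)$ has powerful class at most $\pwc P-1<p$. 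The base case $\pwc P\le 1$ (so $\eta(P)=P$, with weak closure automatic) reproduces Mann's theorem directly from Theorem~\ref{thm:2ndgrun}.
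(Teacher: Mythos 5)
Your proposal contains a genuine gap, and it stems from a misquotation of Yoshida's theorem. The version the paper invokes (Isaacs, \cite[10.1 Theorem]{Isa11}) requires only that $P$ have no \emph{homomorphic image} isomorphic to $C_p\wr C_p$, not that $P$ have no \emph{section} isomorphic to it. With the quotient version your argument closes in two lines, exactly as the paper does: Lemma \ref{lem:cpwrcp} gives $\pwc(C_p\wr C_p)=p$ (since $C_p\wr C_p$ has maximal class $p$), and the quotient monotonicity $\pwc(P/K)\le \pwc P<p$ recorded in Section \ref{s:prelim} rules out $C_p\wr C_p$ as a quotient of $P$. By strengthening the hypothesis to sections, you manufactured an obligation --- monotonicity of $\pwc$ under passage to subgroups --- that is simply not available. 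As you yourself note, this fails already at the bottom rung: subgroups of powerful $p$-groups need not be powerful, so $\pwc$ is not subgroup-monotone, and nothing in the paper (or in Lemmas \ref{lem:pwh} and \ref{lem:pwhNp}, which concern normal subgroups and their powerful heights, not arbitrary subgroups) supports the ``purely local assertion'' that $\pwc P<p$ forbids sections isomorphic to $C_p\wr C_p$.

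Neither fallback repairs this. Plan (a) is just the unproven monotonicity restated. Plan (b) hinges on the claim that small powerful class of $P$ forces $\eta(P)$ to be weakly closed in $P$ with respect to $G$; but weak closure is a statement about $G$-fusion, not about the isomorphism type of $P$, and Lemma \ref{lem:cpwrcp} says nothing about it --- the paper deliberately takes weak closure as a \emph{hypothesis} in Theorem \ref{thm:2ndgrun} and Corollary \ref{cor:etatransfer} precisely because it does not follow from conditions on $P$ alone. Moreover, Theorem \ref{thm:2ndgrun} yields control of transfer by $N_G(\eta(P))$, not by $N_G(P)$, and your proposed descent (recursing into $N_G(\eta(P))$, or passing to $G/\eta(P)$ when $\eta(P)\trianglelefteq G$) omits the case analysis and the verification that control of transfer lifts back along these reductions. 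The fix is simply to quote Yoshida correctly: you had already assembled both needed ingredients ($\pwc(C_p\wr C_p)=p$ via Lemma \ref{lem:cpwrcp}, and quotient monotonicity), which together with the quotient form of Yoshida's theorem constitute the paper's entire proof.
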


\begin{proof}
    By Lemma \ref{lem:cpwrcp}, the powerful class of $C_p\wr C_p$ is equal to $p$. It follows that $C_p\wr C_p$ cannot be a quotient of $P$. The result now follows from Yoshida's transfer theorem \cite[10.1 Theorem]{Isa11}.
\end{proof}

A consequence is the following:

\begin{proposition}
    \label{prop:Ngpnilpotent}
    Let $G$ be a finite group and $P$ a Sylow $p$-subgroup of $G$. If $P$ has small powerful class, and if $N_G(P)$ is $p$-nilpotent, then $G$ is $p$-nilpotent.
\end{proposition}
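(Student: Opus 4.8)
The plan is to deduce this from Proposition \ref{prop:Ngtransfer}, whose whole purpose is to guarantee, under the small powerful class hypothesis, that $N_G(P)$ controls transfer in $G$; one then applies the standard principle that a group whose transfer is controlled by a $p$-nilpotent normalizer is itself $p$-nilpotent. First I would record the shape of the normalizer. Since $P\trianglelefteq N_G(P)$ and $N_G(P)$ is $p$-nilpotent, its normal $p$-complement $K=O_{p'}(N_G(P))$ and $P$ are both normal in $N_G(P)$, intersect trivially, and generate it, so in fact $N_G(P)=P\times K$. Consequently $N_G(P)'=P'\times K'$, whence $P\cap N_G(P)'=P'$, and $O^p(N_G(P))=K$, so $N_G(P)/O^p(N_G(P))\cong P$.

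Now apply Proposition \ref{prop:Ngtransfer}. In the form supplied by Yoshida's theorem, control of transfer means that the inclusion $N_G(P)\hookrightarrow G$ induces an isomorphism $N_G(P)/O^p(N_G(P))\cong G/O^p(G)$. Combined with the previous paragraph this gives $G/O^p(G)\cong P$, that is, $|G:O^p(G)|=|P|$. Therefore $O^p(G)$ is a normal subgroup whose order is the $p'$-part of $|G|$, i.e. a normal $p$-complement, and $G$ is $p$-nilpotent. (At the level of the focal subgroup, control of transfer already yields the weaker identity $P\cap G'=P\cap N_G(P)'=P'$, which is the visible shadow of this computation.)

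The delicate point, and the only genuinely non-formal step, is the upgrade from the abelianized information to honest $p$-nilpotency. For nonabelian $P$ this is not automatic: transfer into the abelian group $P/P'$ only detects the focal subgroup $P\cap G'$ (equivalently the $p$-part of $G^{\mathrm{ab}}$), whereas $p$-nilpotency is equivalent to triviality of the hyperfocal subgroup $P\cap O^p(G)$, and these two subgroups can differ by as much as $P'$. Thus the focal identity $P\cap G'=P'$ recorded in Theorem \ref{thm:2ndgrun} is by itself insufficient; what closes the gap is that Yoshida's theorem controls transfer in the stronger, $O^p$-sense used above. I therefore expect the main care in a careful write-up to lie in invoking control of transfer in this $O^p$-form (rather than the weaker focal form), after which the deduction of the normal $p$-complement is purely formal. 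An alternative to citing the strong form directly is an induction on $|G|$ passing to the proper normal subgroup $O^p(G)$, whose Sylow $p$-subgroup $P\cap O^p(G)\le P$ still has no section isomorphic to $C_p\wr C_p$; the obstacle there is verifying that the $p$-nilpotency of the relevant normalizer is preserved on descent, which is precisely what the $O^p$-strength of Yoshida's theorem supplies in one stroke.
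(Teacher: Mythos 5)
Your proposal is correct and takes essentially the same route as the paper: the paper likewise applies Proposition \ref{prop:Ngtransfer} in the strong $O^p$-form, observing that $p$-nilpotency of $N=N_G(P)$ gives $O^p(N)\cap P=1$ and that control of transfer gives $O^p(N)=N\cap O^p(G)$, whence $O^p(G)\cap P=1$ and $O^p(G)$ is a normal $p$-complement. Your computation $G/O^p(G)\cong N/O^p(N)\cong P$ is a trivially equivalent repackaging of this intersection argument; only your closing caveat is slightly off, since even the weaker focal identity $P\cap G'=P'\le \Phi(P)$ would suffice via Tate's theorem applied to $G'O^p(G)$.
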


\begin{proof}
    Denote $N=N_G(P)$. As $N$ is $p$-nilpotent, we have that $O^p(N)\cap P=1$. By Proposition \ref{prop:Ngtransfer}, we also have that $O^p(N)=N\cap O^p(G)$. This gives $O^p(G)\cap P=1$, hence the result.
\end{proof}

The second Gr\" un theorem \cite[IV, 3.7]{HupI} states that if a finite group $G$ is $p$-normal and if $P$ is a Sylow $p$-subgroup of $G$, then $P\cap G'=P\cap N_G(Z(P))'$. Theorem \ref{thm:2ndgrun} states that an analogous result holds when $Z(P)$ is replaced with $\eta(P)$. The proof goes as follows:

\begin{proof}[Proof of Theorem \ref{thm:2ndgrun}]
    Denote $W=\eta(P)$. By Gr\" un's first theorem \cite[IV, 3.4 Satz]{HupI}, we have that
    $P\cap G'=\langle P\cap N_G(P)',P\cap (P')^g\mid g\in G\rangle$. Thus it suffices to show that $P\cap (P')^g\subseteq P\cap N_G(W)'$ for every $g\in G$.
    
    Denote $T=P\cap (P')^g$. Let $i$ be the smallest non-negative integer with the property that $W^{p^i}\subseteq N_G(T)$. Suppose that $i>0$. Then there exists $x\in W^{p^{i-1}}$ and there exists $t\in T$ such that $t^x\notin T$. Then $[t,x]\in [P,W^{p^{i-1}}]\le W^{p^i}\le T$, therefore $t^x=t[t,x]\in T$, a contradiction. This shows that $W\subseteq N_G(T)$. We also have $W^g\subseteq N_G(T)$ by a similar argument. Thus there exist a Sylow $p$-subgroup $P_1$ of $N_G(T)$  and $n\in N_G(T)$ such that $W\subseteq P_1$ and $W^g\subseteq P_1^n$. Let $P_2$ be a Sylow $p$-subgroup of $G$ that contains $P_1$. Then $W\subseteq P_2$ and $W^{gn^{-1}}\subseteq P_2$. As $W$ is weakly closed in $P_2$ with respect to $G$, we have that
    $W=W^{gn^{-1}}$. This shows that $gn^{-1}\in N_G(W)$. We have $T=T^{n^{-1}}=P^{n^{-1}}\cap (P')^{gn^{-1}}$. As $P\subseteq N_G(W)$, we conclude that 
    $(P')^{gn^{-1}}\subseteq (N_G(W)')^{gn^{-1}}=N_G(W)'$, hence 
    $T=P\cap (P')^g=P^{n^{-1}}\cap (P')^{gn^{-1}}\subseteq P\cap N_G(W)'$. This concludes the proof.
\end{proof}

\begin{corollary}
    \label{cor:etatransfer}
    Let $G$ be a finite group and $P$ a Sylow $p$-subgroup of $G$. If $\eta(P)$ is weakly closed in $P$ with respect to $G$, then the functor $\eta$ controls transfer in $G$.
\end{corollary}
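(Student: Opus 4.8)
The plan is to reduce the statement directly to Theorem \ref{thm:2ndgrun} by unwinding the meaning of control of transfer for a positive characteristic $p$-functor. Recall that, for such a functor $\eta$, the assertion that $\eta$ controls transfer in $G$ means precisely that the subgroup $N_G(\eta(P))$ controls transfer in $G$, where $P$ is the given Sylow $p$-subgroup. So the whole task is to verify that $N_G(\eta(P))$ controls transfer, and the substantive input will be the focal-subgroup equality already established.

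First I would record that $N_G(\eta(P))$ contains a Sylow $p$-subgroup. Indeed, $\eta(P)$ is a characteristic subgroup of $P$, so $P$ normalizes $\eta(P)$, whence $P\le N_G(\eta(P))\le G$. Thus $N_G(\eta(P))$ is an intermediate subgroup of the type to which the notion of control of transfer applies.

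Next I would invoke the standard criterion coming from the focal subgroup theorem, applied both to $G$ and to the intermediate subgroup $H=N_G(\eta(P))$: a subgroup $H$ with $P\le H\le G$ controls transfer in $G$ if and only if $P\cap G'=P\cap H'$ (see \cite{Isa11}). Note that one containment, $P\cap H'\subseteq P\cap G'$, always holds since $H'\le G'$, so the content of the criterion is the reverse inclusion, i.e.\ the equality.

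Finally, Theorem \ref{thm:2ndgrun} supplies exactly the equality $P\cap G'=P\cap N_G(\eta(P))'$ under the hypothesis that $\eta(P)$ is weakly closed in $P$ with respect to $G$. Combining this with the criterion above shows that $N_G(\eta(P))$ controls transfer in $G$, which is by definition the statement that $\eta$ controls transfer in $G$. I do not expect a genuine obstacle here: all the real work resides in Theorem \ref{thm:2ndgrun}, and what remains is the bookkeeping of translating the focal-subgroup equality into the language of control of transfer, together with the observation that $P$ lies in $N_G(\eta(P))$.
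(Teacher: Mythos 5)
Your proposal is correct and follows essentially the same route as the paper, which deduces the corollary from Theorem \ref{thm:2ndgrun} together with the standard criterion (cited there from \cite[p.~52]{HBIII}) that a subgroup $H$ with $P\le H\le G$ controls transfer if and only if $P\cap G'=P\cap H'$. You have merely made explicit the bookkeeping --- $P\le N_G(\eta(P))$ since $\eta(P)$ is characteristic in $P$, and the identification of control of transfer by the functor $\eta$ with control by $N_G(\eta(P))$ --- that the paper compresses into a one-line citation.
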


\begin{proof}
    This follows directly from Theorem \ref{thm:2ndgrun} and \cite[p. 52]{HBIII}.
\end{proof}

Corollary \ref{cor:etatransfer} is related to a result of Gilloti and Serena \cite{GS84} stating that if $\Phi(P)$ is strongly closed in $P$ with respect to $G$, then $\Phi$ controls transfer in $G$. In the case when $\eta (P)$ is strongly closed in $P$ with respect to $G$, we can say more:

\begin{proposition}
    \label{prop:fusion}
    Let $G$ be a finite group and $P$ a Sylow $p$-subgroup of $G$. If $\eta(P)$ is strongly closed in $P$ with respect to $G$, and all $\eta(P)^{p^i}$ are weakly closed in $P$ with respect to $G$, then $\eta$ strongly controls fusion in $G$.
\end{proposition}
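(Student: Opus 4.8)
The plan is to prove the statement in two stages: first that $N=N_G(W)$, where $W=\eta(P)$, controls $G$-fusion in $P$ in the ordinary sense, and then to upgrade this to \emph{strong} control by tracking the conjugating elements through the descending chain $W=W_0\ge W_1\ge\cdots\ge W_m=1$, $W_i=\eta(P)^{p^i}$. Here $N$ strongly controls fusion means that for every $A\le P$ and every $g\in G$ with $A^g\le P$ there is some $h\in N$ with $a^h=a^g$ for all $a\in A$, that is, $g\in C_G(A)\,N$; the content beyond ordinary control is that the conjugating element itself, and not merely some $N$-conjugation carrying $A$ to $A^g$, is accounted for modulo $C_G(A)$.

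For the ordinary control I would use only the weak closure of $W$. Since $W$ is weakly closed in $P$ with respect to $G$, it is weakly closed with respect to every intermediate subgroup $M$ with $P\le M\le G$, and $P$ is a Sylow $p$-subgroup of each such $M$. Hence Corollary~\ref{cor:etatransfer}, applied inside $M$, gives that $N\cap M=N_M(W)$ controls transfer in $M$, i.e. $P\cap M'=P\cap (N\cap M)'$. By the standard reduction of control of fusion to control of transfer in all such over-groups $M$ (equivalently, via Alperin's fusion theorem, carrying out the argument of Theorem~\ref{thm:2ndgrun} inside each relevant $p$-local subgroup), this yields that $N$ controls $G$-fusion in $P$; see \cite{HBIII}.

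The substance of the proposition is the upgrade to strong control, and this is where the two stronger hypotheses enter. The key structural input is the relation $[W_i,P]\le W_{i+1}$, which is the same commutator inequality $[P,W^{p^{i-1}}]\le W^{p^i}$ coming from powerful embedding that drives the proof of Theorem~\ref{thm:2ndgrun} (for $p$ odd it rests on Lemma~\ref{lem:pwhNp}). Consequently $W_i/W_{i+1}$ is central in $P/W_{i+1}$ and the normalizers increase, $N=N_G(W_0)\le N_G(W_1)\le\cdots\le N_G(W_m)=G$. Given $A$ and $g$ as above, ordinary control produces some $x\in N$ with $A^g=A^x$, and the task is to correct $x$ by an element of $C_G(A)$. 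I would do this by descending induction on $i$, working in $\overline{L}_i=N_G(W_i)/W_i$, where the image $\overline{W}_{i-1}$ is central in the Sylow $p$-subgroup $\overline{P}$. In that quotient, after replacing $\overline{A^g}$ by a fully centralized conjugate, a Burnside-type Sylow argument in the centralizer of $\overline{A^g}$ together with the weak closure of $W_{i-1}$ moves the conjugating element into $N_{\overline{L}_i}(\overline{W}_{i-1})$ by an element centralizing $\overline{A^g}$; and since $W_i\le W_{i-1}$, this normalization lifts exactly to $N_G(W_{i-1})$.

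The main obstacle is exactly the ``strong'' bookkeeping across this tower. The commutators produced at each step lie in the next layer $W_i$ rather than vanishing, because $W_{i-1}$ centralizes $A$ only modulo $W_i$; hence the centralizing condition does not lift verbatim through the $p$-quotient by $W_i$, and a naive layer-by-layer correction would leave an accumulated error in $W$ rather than landing in $C_G(A)$. This is precisely what the two hypotheses are designed to neutralize: the strong closure of $W$ guarantees that every such error term, lying in $W_i\le W$, is $G$-conjugated only within $W$, so its own fusion is already internal to $N$; while the weak closure of each individual $W_i$ pins the conjugating elements down modulo $C_G(A)$ at every level of the chain. Propagating these two facts so that the total correction lands in $C_G(A)$ on the nose is the crux of the argument. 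I also expect the case $p=2$ to need separate care, since there powerful embedding only gives $[W,P]\le W^{4}$ and the identity $[W_i,P]\le W_{i+1}$ together with the behaviour of the $p$-th power maps must be re-established before the tower argument applies.
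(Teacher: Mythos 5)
There is a genuine gap, and it sits exactly where you yourself place it. The paper's proof is a two-line reduction: since $W=\eta(P)$ is powerfully embedded in $P$, it is in particular a powerful $p$-group, so the chain $W\ge W^p\ge W^{p^2}\ge\cdots$ is a \emph{central series} of $W$ (indeed $[W^{p^i},P]\le W^{p^{i+1}}$, by Lubotzky--Mann each $W^{p^i}$ is again powerfully embedded and the power maps behave well); its terms are weakly closed by hypothesis and its top term is strongly closed, so the main theorem of Gilotti and Serena \cite{GS85} applies verbatim and yields that $N_G(W)$ controls strong fusion. What you label ``the crux of the argument''---propagating the layer-by-layer corrections so that the total correcting element lands in $C_G(A)$ on the nose---is precisely the content of that theorem, and your sketch supplies no mechanism for it: the descending induction through $N_G(W_i)/W_i$ is only described, the ``Burnside-type Sylow argument'' is never carried out, and you concede that the naive correction leaves an error in $W$ rather than in $C_G(A)$. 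In effect you propose to re-derive the Gilotti--Serena theorem and stop before its proof begins. (Your separate worry about $p=2$ is unfounded: there powerful embedding gives $[W,P]\le W^4$, which is \emph{stronger} than what is needed, and the central-series claim for the power subgroups goes through for all $p$.)

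Your first stage contains an independent error as well. You claim that applying Corollary \ref{cor:etatransfer} to every overgroup $P\le M\le G$ gives control of fusion by a ``standard reduction of control of fusion to control of transfer.'' No such reduction exists: the implication runs the other way. Control of fusion implies control of transfer (via the focal subgroup theorem, fusion control forces $P\cap M'=P\cap N_M(W)'$), but equality of focal subgroups in all overgroups does not recover fusion control; Corollary \ref{cor:etatransfer} is genuinely weaker than what you need even for ordinary (let alone strong) control. The structural facts you assert along the way---$[W_i,P]\le W_{i+1}$, hence the tower $N_G(W_0)\le N_G(W_1)\le\cdots\le G$---are correct and are the same ingredients the paper feeds into \cite{GS85}; the missing piece is the fusion-theoretic engine, which must either be cited or proved, and your proposal does neither.
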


\begin{proof}
    As $\eta(P)\ge \eta(P)^p\ge\eta(P)^{p^2}\ge \cdots$ forms a central series of $\eta(P)$, the claim follows from the main result od Gilloti and Serena in \cite{GS85}.
\end{proof}

We note here that if, for example, $G$ is a $p$-nilpotent group with a Sylow $p$-subgroup $P$, then every normal subgroup of $P$ is strongly closed in $P$ with respect to $G$.
\section{Powerful class and $p$-length}
\label{s:plength}

\noindent
In this section we prove Theorem \ref{thm:plength}. 

We recall a notion introduced by Gonz\' alez-S\' anchez \cite{Gon07}.
Let $P$ be a finite $p$-group. A sequence $N=N_1\ge N_2\ge\cdots\ge N_k=1$ of normal subgroups of $P$ is a {\it potent filtration of type $t$} if $[N_i,P]\le N_{i+1}$ and $[N_i,{}_tP]\le N_{i+1}^p$ for all $i=1,\ldots ,k-1$. In this case we say that $N$ is {\it PF-embedded of type $t$ in $P$}. We have the following results proved by Gonz\' alez-S\' anchez and Spagnuolo \cite{GSS15}:

\begin{lemma}[\cite{GSS15}]
    \label{lem:GSS}
    Let $G$ be a $p$-solvable group, $P$ its Sylow $p$-subgroup and $N$ a normal subgroup of $P$.
    \begin{enumerate}
        \item If $p>2$ and $N$ is PF-embedded of type $p-2$ in $P$, then $N\subseteq O_{p'p}(G)$.
        \item If $p=2$ and $N$ is PF-embedded of type $1$ in $P$, then $N\subseteq O_{2'2}(G)$.
    \end{enumerate}
\end{lemma}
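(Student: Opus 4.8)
The plan is to treat this as a generalization of Khukhro's theorem \cite{Khu12} --- which handles the case where $P$ itself is powerful --- from powerful Sylow subgroups to PF-embedded subgroups, and to run a Hall--Higman-type argument. First I would reduce to the essential case. Passing to $\bar G=G/O_{p'}(G)$, the image $\bar N$ of $N$ is normal in the Sylow subgroup $\bar P$ of $\bar G$, and since the defining conditions $[N_i,P]\le N_{i+1}$ and $[N_i,{}_tP]\le N_{i+1}^p$ only involve commutators and $p$-th powers, they persist in the quotient; hence $\bar N$ is again PF-embedded of type $t$ in $\bar P$. It therefore suffices to treat $O_{p'}(G)=1$, where $O_{p'p}(G)=O_p(G)=:V$ and the goal becomes $N\le V$. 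Here I would invoke the basic structural fact for $p$-solvable groups with $O_{p'}(G)=1$, namely that $V$ is self-centralizing, $C_G(V)\le V$.

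Arguing by induction on $|G|$ (equivalently, via a minimal counterexample), the obstruction to $N\le V$ is located, through the standard Hall--Higman reduction (see \cite{HBIII}), in a faithful coprime action of a suitable nontrivial section of $N$ on a $p'$-chief factor $Q$ of $G$: if no such action existed, the self-centralizing property of $V$ together with coprime-action arguments would force $N\le V$. Thus a genuine counterexample would produce a section of $N$ acting faithfully, and with prescribed unipotent behaviour, on an elementary abelian $q$-group with $q\ne p$, and the whole argument turns on excluding this.

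The heart of the proof is to show that a PF-embedded subgroup of type $t\le p-2$ cannot support such an action, and this is exactly where the value $p-2$ should enter. The condition $[N_i,{}_tP]\le N_{i+1}^p$ with $t\le p-2$ is the threshold under which $N$ is \emph{saturable} in the sense of Gonz\'alez-S\'anchez \cite{Gon07}, so that $N$ corresponds to a Lie ring and its $p$-th powers dominate the relevant commutators; this should bound the unipotent length --- equivalently, the degree of the minimal polynomial --- of the action of $N$ on $Q$ below the lower bound that Hall--Higman theory forces from faithfulness, giving the desired contradiction. I expect this quantitative matching, translating ``type $t\le p-2$'' into a sharp ceiling on the unipotent complexity and checking that it undercuts the Hall--Higman estimate, to be the main obstacle. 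For $p=2$ the saturability/Lazard correspondence is unavailable and Hall--Higman theory exhibits its well-known exceptional behaviour, which is precisely why only type $1$ can be pushed through and why the resulting containment (hence the $p$-length bound in Theorem \ref{thm:plength}) is correspondingly weaker; this case I would handle separately, feeding the type-$1$ condition $[N_i,P]\le N_{i+1}^2$ directly into the commutator estimates in place of the saturability argument.
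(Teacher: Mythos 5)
You should first be aware that the paper itself does not prove this lemma: it is imported verbatim from \cite{GSS15}, so the only benchmark is the proof there, whose method is visible inside this paper in Proposition \ref{prop:peO} (the powerful case, adapted from \cite[Theorem 4.1]{Khu12}). Your opening reductions are correct as far as they go: the potent-filtration conditions do pass to $G/O_{p'}(G)$, so one may assume $O_{p'}(G)=1$, and then $C_G(V)\le V$ for $V=O_p(G)$ by the standard self-centralizing property. But the proposal stops exactly where the theorem begins. The entire content of the lemma is the exclusion of the bad action, and you explicitly defer it (``I expect this quantitative matching \ldots{} to be the main obstacle''). A proof in which the decisive estimate is conjectured rather than carried out has a hole at its center, and that is the situation here.

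Moreover, the plan aims the wrong tools at that hole. First, a nontrivial finite $p$-group is never $p$-saturable in the sense of \cite{Gon07} --- saturable groups are torsion-free --- so ``$N$ is saturable, hence governed by a Lie ring'' cannot be the mechanism; what type $t\le p-2$ actually buys is the potent power structure of \cite{FGJ08} (e.g.\ $[N^p,P]=[N,P]^p$ and $(N^{p^i})^{p^j}=N^{p^{i+j}}$), which is precisely what both \cite{Khu12} and \cite{GSS15} exploit, and what this paper re-derives in Proposition \ref{prop:PFtypep-2}. Second, the decisive action is not a coprime action on a $p'$-chief factor: coprime actions are semisimple, so ``prescribed unipotent behaviour'' on such a factor is a contradiction in terms. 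The action that matters is the characteristic-$p$ action of a $p$-element of $N$ on $V$ itself. In the Khukhro-style argument one writes $P=V\rtimes S$, takes $x\in N$ of maximal order $p^n$, sets $y=x^{p^{n-2}}$ (of order $p^2$), and uses the filtration to place $y$ in a low term of the upper central series of $P$ (in the powerful case, $y\in Z_2(P)$); writing $y=vs$ with $s\in S$ of order $p$, the identity $1+s+\cdots+s^{p-1}\equiv (s-1)^{p-1}$ on $V$ together with $y^p\neq 1$ forces a Jordan block of size $p$ for $s$, producing $w\in V$ with $[w,{}_{p-1}s]\neq 1$, against the bounded commutator length $[w,{}_{p-1}y]=1$ imposed by the central-series position of $y$. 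This elementary norm-map computation is exactly the ``quantitative matching'' you left open, and it deliberately sidesteps the Hall--Higman B-theorem, whose exceptional cases (Fermat primes, $p=2$) would genuinely obstruct the route you sketch; note also that the actual $p=2$ case is not an exceptional-case workaround but the same filtration argument fed with type $1$. To salvage your write-up you would need to carry out this computation for a general potent filtration of type $p-2$, not merely assert that a bound should emerge.
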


At first we mention a variation of Khukhro's result \cite[Theorem 4.1]{Khu12}:

\begin{proposition}
    \label{prop:peO}
    Let $G$ be a finite $p$-solvable group and $P$ a Sylow $p$-subgroup of $G$. Let $N\triangleleft P$ be powerfully embedded in $P$. Then $N\subseteq O_{p'p}(G)$.
\end{proposition}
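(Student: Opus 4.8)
The plan is to reduce the statement to Lemma~\ref{lem:GSS} by producing, for the powerfully embedded subgroup $N$, a potent filtration of $N$ of the type that lemma demands: type $p-2$ when $p>2$ and type $1$ when $p=2$. Being powerfully embedded, $N$ is in particular a powerful $p$-group, so the Lubotzky--Mann commutator calculus for powerful groups is available: each power $N^{p^i}$ is again powerfully embedded in $P$, and $[N^{p^i},P]=[N,P]^{p^i}\le (N^{p})^{p^i}=N^{p^{i+1}}$. The natural candidate is therefore the descending power chain $N\ge N^{p}\ge N^{p^2}\ge\cdots\ge 1$, and the whole problem reduces to bookkeeping how fast a repeated commutator with $P$ pushes $N^{p^i}$ down this chain.

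Setting $N_i=N^{p^{i-1}}$, the inclusion $[N_i,P]\le N_{i+1}$ is immediate from the displayed computation, so the central-series condition holds. For the potency condition one iterates. For odd $p$ a single commutator lowers the exponent by one, so $[N_i,{}_2P]\le N^{p^{i+1}}=N_{i+1}^{\,p}$; thus the power chain is a potent filtration of type $2$. For $p=2$ the sharper relation $[N,P]\le N^4$ lowers the exponent by two in one step, giving $[N_i,P]\le N^{2^{i+1}}=N_{i+1}^{\,2}$, i.e.\ type $1$. Lemma~\ref{lem:GSS} now disposes of two of the three cases: when $p\ge 5$ a type~$2$ filtration is \emph{a fortiori} of type $p-2$, so part~(1) yields $N\subseteq O_{p'p}(G)$, and when $p=2$ part~(2) applies directly.

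The remaining case $p=3$ is the genuine obstacle, since there the required type is $p-2=1$ while the power chain only delivers type $2$. This gap is not an artefact of the chosen filtration: a nonabelian powerful $3$-group of order $27$ whose centre has exponent $3$ admits \emph{no} type-$1$ potent filtration at all, because if some term $N_i$ is not central then $[N_i,P]=N'\ne 1$ forces $N_{i+1}^{\,3}\supseteq N'\ne 1$, hence $N_{i+1}$ is again non-central, and the filtration can never reach $1$. So Lemma~\ref{lem:GSS} cannot be invoked for such an $N$ itself, and $p=3$ must be argued separately. I would first factor out $O_{p'}(G)$ to reduce to $O_{p'}(G)=1$, where $V:=O_p(G)=O_{p'p}(G)$ is self-centralizing; then, inducting on $|N|$ and applying the statement to the strictly smaller powerfully embedded subgroup $N^{p}=\Phi(N)$, one gets $N^{p}\le V$ and hence $[N,P]\le N^{p}\le V$. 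Consequently the image $\overline N=NV/V$ is an elementary abelian subgroup contained in the centre of a Sylow $p$-subgroup $\overline P$ of $\overline G=G/V$, with $O_p(\overline G)=1$ and $U:=O_{p'}(\overline G)$ self-centralizing. It then suffices to prove that $\overline N$ centralizes $U$, for then $\overline N\le C_{\overline G}(U)\le U$ is a $p$-subgroup of a $p'$-group and so is trivial, giving $N\le V=O_{p'p}(G)$. Showing that this coprime action of the central elementary abelian $\overline N$ on $U$ is trivial is a Hall--Higman-type statement that uses the powerfully embedded origin of $\overline N$, and I expect it to be the crux of the whole argument; one may alternatively route this case through Khukhro's theorem \cite{Khu12}, of which the special case $N=P$ is precisely the assertion $\ell_p(G)\le 1$.
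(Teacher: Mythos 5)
Your $p=2$ and $p\ge 5$ cases are correct. The descending power chain $N\ge N^p\ge N^{p^2}\ge\cdots$ is indeed a potent filtration of type $1$ for $p=2$ (this is exactly the paper's own argument in that case) and of type $2$ for odd $p$, hence of type $p-2$ once $p\ge 5$, so Lemma~\ref{lem:GSS} applies. For $p\ge 5$ this is in fact a genuinely shorter route than the paper's: the paper does not use Lemma~\ref{lem:GSS} for odd $p$ at all in this proposition, but instead adapts Khukhro's minimal-counterexample argument uniformly for all odd primes. You also correctly locate the obstruction at $p=3$, and your observation that a nonabelian powerful group of order $27$ with centre of exponent $3$ admits no type-$1$ potent filtration is sound.

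The $p=3$ case, however, is a genuine gap, and not merely an unfinished routine step: the reduction you set up retains too little information for the final claim to be provable. After your induction you know only that $\overline N$ is elementary abelian and central in $\overline P$, that $O_3(\overline G)=1$, and that $U=O_{3'}(\overline G)$ is self-centralizing, and you need $[\overline N,U]=1$. This is \emph{false} from that data alone: take $G=\mathbb{F}_3^2\rtimes \SL(2,3)$ and $N=P$, the extraspecial group of order $27$ and exponent $3$. Then $O_{3'}(G)=1$, $V=O_3(G)=\mathbb{F}_3^2$, $N^3=1\le V$ and $[N,P]=Z(P)\le V$, so every residual hypothesis of your final claim holds with $\overline N=\overline P\cong C_3$, yet $\overline N$ acts nontrivially on $U\cong Q_8$. (Of course this $N$ is not powerfully embedded, since $[N,N]=Z(P)\neq 1=N^3$, but that is precisely the point: the powerfully-embedded hypothesis must survive the reduction in a form strictly stronger than ``$N^p\le V$ and $[N,P]\le V$''.) Nor can Khukhro's theorem be invoked as a black box, since it concerns the full Sylow subgroup being powerful and there is no evident reduction from a proper $N<P$. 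What the paper actually does for odd $p$ (including $3$) is re-run Khukhro's proof with $N$ in place of $P$: in a minimal counterexample with $V=O_p(G)$ elementary abelian and $C_G(V)\le V$, one takes $x\in N$ of maximal order $p^n$, uses the power structure of $N$ to show that $y=x^{p^{n-2}}$ satisfies $y^p\in Z(P)$ and $y\in Z_2(P)$, and then derives a contradiction from a Jordan block of size $p$ in the action on $V$. It is this element-level exploitation of powerfulness, rather than a coprime-action lemma about $\overline N$, that closes the case you leave open.
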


\begin{proof}
    Consider first the case $p=2$. Here we note that the series $N\ge N^2\ge N^4\ge N^8\ge \cdots$ is a potent filtration of type $1$ starting with $N$. This shows that $N$ is PF-embedded of type $1$ in $P$. The result now follows from item (2) of Lemma \ref{lem:GSS}.

    We are left with the case when $p$ is odd.
    The proof is essentially an adaptation of the proof of \cite[Theorem 4.1]{Khu12}, we give it for reader's convenience. Let $G$ be a smallest possible counterexample, with $N$ powerfully embedded in $P$ and $N\not\subseteq O_{p'p}(G)$. By minimality, we have that $O_{p'}(G)=1$, and we may assume that $V=O_p(G)$ is an elementary abelian $p$-group with $N\not\subseteq V$. By \cite[Theorem 6.3.2]{Gor80} we have that $C_G(V)=1$, and thus $V$ is an $\mathbb{F}_p[G/V]$-module.

    If $Q$ is Hall $p'$-subgroup of $O_{pp'}(G)$, then we may conclude as in \cite{Khu12} that $Q$ acts faithfully on $V$, and we have that $O_{pp'}(G)=VQ$ and $G=VN_G(Q)$. Furthermore, if $S$ is a Sylow $p$-subgroup of $N_G(Q)$, then $P=V\rtimes S$.
    
    Take $x\in N$ of maximal possible order $p^n$. If $n=1$, then $[N,P]\le N^p=1$, therefore $N\le Z(P)\le V$. So we have that $n\ge 2$. Let $y=x^{p^{n-2}}$. As $[N^{p^{n-1}},P]\le N^{p^n}=1$, it follows that $y^p\in Z(P)\le V$. Note also that $y$ does not belong to $V$, as its order is $p^2$. On the other hand, $[N^{p^{n-2}},P,P]\le N^{p^n}=1$ implies $y\in Z_2(P)$. Now write $y=vs$, where $v\in V$ and $s\in S\setminus\{ 1\}$. As $s^p\equiv y^p\mod [V,N]$, we conclude that $s^p\in V\cap S=1$. This implies that the linear transformation of $V$ induced by conjugation by $s$ has at least one Jordan block of size $p\times p$. Thus there exists $w\in V$ with $[w,_{p-1}s]\neq 1$. This gives $[w,{}_{p-1}y]=1$, which contradicts the fact that $y\in Z_2(P)$.
\end{proof}

Our next result provides a basis for induction used later on in the proof of Theorem \ref{thm:plength} for the case $p>3$.

\begin{proposition}
    \label{prop:PFtypep-2}
    Let $G$ be a finite $p$-solvable group and $P$ a Sylow $p$-subgroup of $G$, where $p>3$. Let $N\triangleleft P$ satisfy $\pwh_PN<p-1$. Then $N\subseteq O_{p'p}(G)$.
\end{proposition}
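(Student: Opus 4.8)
The plan is to deduce the statement from part (1) of Lemma~\ref{lem:GSS}: that result confines any normal subgroup of $P$ which is PF-embedded of type $p-2$ in $P$ to $O_{p'p}(G)$, so it suffices to produce a potent filtration of type $p-2$ starting at $N$. This is exactly the intended target, and it explains the numerology of the theorem: the hypothesis $\pwh_P N<p-1$ is the group-theoretic shadow of being PF-embedded of type $p-2$, which via Lemma~\ref{lem:GSS} places $N$ inside a single $O_{p'p}$-layer. It is also why the induction for Theorem~\ref{thm:plength}(1) will be able to spend $p-2$ units of powerful class per unit of $p$-length.

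To build the filtration I would interweave the $P$-lower central series of $N$ with its $p$-th powers, the two being synchronised by Lemma~\ref{lem:pwhNp}. Concretely, the terms are taken to be products of the subgroups $[N,{}_{b}P]^{p^{c}}$, arranged so that each commutator with $P$ advances the filtration by one step; this yields $[N_i,P]\le N_{i+1}$, i.e.\ condition~(1). The role of the $p$-th powers is governed by Lemma~\ref{lem:pwhNp}: since $\pwh_P N\le p-2$, and since by Lemma~\ref{lem:pwh}(1)--(3) every subgroup of the form $[N,{}_{b}P]^{p^{c}}$, as well as every product of such, again has powerful height at most $p-2$, Lemma~\ref{lem:pwhNp} applies to each term $M$ of the filtration and gives $[M,{}_{p-2}P]\le M^{p}$. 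Thus $p-2$ commutators advance the power level by exactly one, which is precisely what condition~(2) of a type-$(p-2)$ potent filtration requires; Lemma~\ref{lem:pwh}(3) is exactly what is needed to keep all the product terms within the range of Lemma~\ref{lem:pwhNp}.

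The main obstacle is the verification of condition~(2), namely $[N_i,{}_{p-2}P]\le N_{i+1}^{p}$, together with checking that the proposed terms form a descending normal series closing at $1$. Two points need care. First, commutators and $p$-th powers do not commute on the nose: expanding $[M^{p^{c}},g]$ in terms of $[M,g]^{p^{c}}$ produces Hall--Petrescu correction terms that must be shown to land in the next term, and this is one place where $p>3$ enters, the slack $p-3\ge 1$ ensuring that a single commutator already pushes such corrections deep enough. Second, and more delicately, one must align the indexing so that the $p-2$ commutators furnished by Lemma~\ref{lem:pwhNp} land in $N_{i+1}^{p}$ and not merely in $N_{i+1}$ or in $N_{i}^{p}$; getting this exactly right is what separates type $p-2$ from the type $p-1$ that the plain lower central series would produce, and it is here that the \emph{strict} inequality $\pwh_P N<p-1$ is indispensable. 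As a sanity check on the mechanism, the powerfully embedded case $\pwh_P N\le 1$ already displays it: the pure power series $N\ge N^{p}\ge N^{p^{2}}\ge\cdots$ is a potent filtration of type $p-2$ precisely when $p>3$, since then each commutator with $P$ lowers the power level by one and the $p-2\ge 2$ available commutators suffice to drop the two levels demanded by condition~(2).
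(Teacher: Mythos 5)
Your overall strategy coincides with the paper's: reduce via Lemma~\ref{lem:GSS}(1) to producing a potent filtration of type $p-2$ starting at $N$, and build it by interweaving $p$-th powers with a series witnessing $\pwh_PN<p-1$, with Lemmas~\ref{lem:pwh} and~\ref{lem:pwhNp} as the working tools. But there is a genuine gap at the only hard point. Your mechanism for condition~(2) is that each term $M$ of the filtration has $\pwh_PM\le p-2$, so Lemma~\ref{lem:pwhNp} gives $[M,{}_{p-2}P]\le M^p$, and you assert this is ``precisely what condition~(2) requires.'' It is not: a potent filtration of type $p-2$ demands $[M_i,{}_{p-2}P]\le M_{i+1}^p$, a full power level on the \emph{next} term, whereas $M_i^p$ only lands you in $M_{i+1}$ (essentially condition~(1) again). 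You yourself flag this indexing misalignment in your final paragraph as the delicate point, but you never resolve it, and no concrete choice of ``products of $[N,{}_bP]^{p^c}$'' is written down and checked; the proposal stops exactly where the proof begins.

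For comparison, the paper interweaves powers with the $\eta$-series $1=N_0\le N_1\le\cdots\le N_k=N$ (with $k\le p-2$) rather than the lower central series, setting $M_1=N$ and $M_{i+1}=M_i^pN_{p-i-2}$. Condition~(1) is a short induction, and condition~(2) for $i=1$ then falls out of condition~(1) iterated $p-2$ times: $[M_1,{}_{p-2}P]\le M_{p-1}=M_{p-2}^pN_0=M_{p-2}^p\le M_2^p$; the telescoping down to $N_0=1$, available precisely because $k\le p-2$, is what converts $p-2$ single commutator steps into a genuine extra $p$-th power, and is the resolution of your alignment problem. The inductive step is handled by reducing modulo $M_{i+2}^p$ and showing $[N_{p-i-2},{}_{p-2}P]=1$ in the quotient, using the descent $[N_{p-i-2},{}_\ell P]\le N_{p-i-\ell-2}$. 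Finally, the Hall--Petrescu corrections you invoke to explain the role of $p>3$ are dispatched differently in the paper: once $\pwh_PK\le p-2$, Lemma~\ref{lem:pwhNp} gives $[K,{}_{p-2}P]\le K^p$, whence $[K^p,P]=[K,P]^p$ and $(K^{p^i})^{p^j}=K^{p^{i+j}}$ by the cited results of Fern\'andez-Alcober, Gonz\'alez-S\'anchez and Jaikin-Zapirain; the hypothesis $p>3$ enters instead through containments such as $N_2\le N_{p-3}$, which require $p\ge 5$. Your sanity check in the powerfully embedded case is correct as far as it goes, but it does not certify the general inductive mechanism, which is the actual content of the proposition.
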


\begin{proof}
    By Lemma \ref{lem:GSS} (1), it suffices to show that $N$ is PF-embedded of type $p-2$ in $P$. Let 
    $$1=N_0\le N_1\le \cdots \le N_k=N$$
    be an $\eta$-series of $N$ in $P$ with $k<p-1$. Put $N_j=1$ for $j\le 0$, and $N_j=N$ for $j\ge k$.

    At first we observe that Lemma \ref{lem:pwhNp} gives that if $K$ is any normal subgroup of $P$ with $\pwh_PK\le p-2$, then $[K,{}_{p-2}P]\le K^p$ for all $j$. This, in particular, implies that $[K^p,P]=[K,P]^p$ and $(K^{p^i})^{p^j}=K^{p^{i+j}}$by \cite[Proposition 3.2, Theorem 3.4]{FGJ08}. This, together with Lemma \ref{lem:pwh}, will be used throughout the proof without further mention.
    
    Define 
    \begin{align*}
        M_1 &= N,\\
        M_{i+1} &= M_i^pN_{p-i-2},
    \end{align*}
    where $i>0$. We claim that $(M_i)_i$ is a potent filtration of type $p-2$ in $P$.

    At first note that $[M_1,P]=[N_1,P]\le N_1^pN_2\le N_1^pN_{p-3}=M_2$, since $p\ge 5$. Suppose $[M_i,P]\le M_{i+1}$ for some $i\ge 1$. Then
    $[M_{i+1},P]=[M_i^pN_{p-i-2},P]=[M_i,P]^p[N_{p-i-2},P]\le M_{i+1}^pN_{p-i-2}^pN_{p-i-3}=M_{i+1}^pN_{p-i-3}=M_{i+2}$.

    It remains to show that $[M_i, {}_{p-2}P]\le M_{i+1}^p$ for all $i\ge 0$. At first consider $[M_1,{}_{p-2}P]=[N,{}_{p-2}P]$. By the previous paragraph, $[M_1,{}_{p-2}P]\le M_{p-1}=M_{p-2}^pN_{p-(p-2)-2}=M_{p-2}^p\le M_2^p$, as required. Assume the claim holds for some $i\ge 1$. We have $[M_{i+1},{}_{p-2}P]=[M_i^p,{}_{p-2}P][N_{p-i-2},{}_{p-2}P]=[M_i,{}_{p-2}P]^p[N_{p-i-2},{}_{p-2}P]\le M_{i+1}^{p^2}[N_{p-i-2},{}_{p-2}P]$. We need to show that the latter group is contained in $M_{i+2}^p$.  We may assume $M_{i+2}^p=1$, which yields $M_{i+1}^{p^2}=N_{p-i-3}^p=1$. Thus the problem reduces to showing that, under these assumptions, $[N_{p-i-2},{}_{p-2}P]=1$. To see this, first observe that $M_{i+1}^{p^2}=1$ implies $N_{p-i-2}^{p^2}=1$.
    This implies $[N_{p-i-2},P,P]\le [N_{p-i-2}^pN_{p-i-3},P]= [N_{p-i-2},P]^p[N_{p-i-3},P]\le N_{p-i-2}^{p^2}N_{p-i-3}^pN_{p-i-4}=N_{p-i-4}$. Inductive argument implies that
    $[N_{p-i-2},{}_\ell P]\le N_{p-i-\ell -2}$ for all $\ell\ge 2$. As $p-2\ge 2$, we conclude $[N_{p-i-2},{}_{p-2}P]\le N_{p-i-(p-2)-2}=1$. This concludes the proof.
\end{proof}

\begin{corollary}
    \label{cor:etap-2}
    Let $G$ be a finite $p$-solvable group and $P$ a Sylow $p$-subgroup of $G$. If $p>2$, then $\eta_{p-2}(P)\subseteq O_{p'p}(G)$.
\end{corollary}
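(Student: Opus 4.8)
The plan is to recognize $\eta_{p-2}(P)$ as a normal subgroup of $P$ of small powerful height, and then to invoke the appropriate embedding result from the two preceding propositions, splitting the argument according to whether $p=3$ or $p>3$.

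First I would record the only structural fact that is needed. By its definition, the upper $\eta$-series
$$1=\eta_0(P)\le\eta_1(P)\le\cdots\le\eta_{p-2}(P)$$
is an $\eta$-series of $\eta_{p-2}(P)$ in $P$ of length $p-2$, since each quotient $\eta_{i+1}(P)/\eta_i(P)=\eta(P/\eta_i(P))$ is powerfully embedded in $P/\eta_i(P)$. Consequently $\pwh_P(\eta_{p-2}(P))\le p-2$. Everything beyond this point is delivered by Propositions \ref{prop:peO} and \ref{prop:PFtypep-2}.

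For $p>3$ the estimate $\pwh_P(\eta_{p-2}(P))\le p-2<p-1$ places $N=\eta_{p-2}(P)$ squarely in the hypothesis of Proposition \ref{prop:PFtypep-2}, which at once gives $\eta_{p-2}(P)\subseteq O_{p'p}(G)$. The case $p=3$ must be treated separately, because Proposition \ref{prop:PFtypep-2} is stated only for $p>3$. Here, however, $p-2=1$, so $\eta_{p-2}(P)=\eta_1(P)=\eta(P)$ is the largest powerfully embedded subgroup of $P$; in particular it is powerfully embedded in $P$, and Proposition \ref{prop:peO} yields $\eta(P)\subseteq O_{p'p}(G)$, completing this case.

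I do not anticipate any real obstacle: the substance of the corollary is already packaged in the two earlier propositions, and the single point requiring attention is the boundary value $p=3$, where the $p>3$ proposition does not apply but the powerfully embedded case of Proposition \ref{prop:peO} takes over seamlessly.
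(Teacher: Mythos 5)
Your proposal is correct and follows the paper's proof exactly: the paper likewise observes that $\pwh_P\eta_{p-2}(P)\le p-2$ (citing a lemma from \cite{Mor24} where you derive it directly from the definition of the upper $\eta$-series, an inessential difference) and then applies Proposition \ref{prop:PFtypep-2} for $p>3$ and Proposition \ref{prop:peO} for $p=3$. Your handling of the boundary case $p=3$, where $\eta_{p-2}(P)=\eta(P)$ is powerfully embedded, matches the paper's treatment precisely.
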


\begin{proof}
    If $p=3$, this follows from Proposition \ref{prop:peO}, as $\eta_1(P)=\eta(P)$ is powerfully embedded in $P$. For $p>3$ note that $\pwh_P \eta_{p-2}(P)\le p-2$ by \cite[Lemma 2.3]{Mor24}, hence the result follows from Proposition \ref{prop:PFtypep-2}.
\end{proof}

\begin{proof}[Proof of Theorem \ref{thm:plength}]
    Suppose first $p=2$. We prove the claim by induction on $k=\pwc P$. If $k=1$, the result follows from \cite[Theorem 4.1]{Khu12}. Suppose the claim holds for some $k\ge 1$, and let $\pwc P=k+1$, Then Lemma \ref{lem:GSS} (2) implies $\pwc(PO_{p'p}(G)/O_{p'p}(G))\le \pwc (P/\eta (P))=\pwc P-1=k$, hence $\ell_p(G)=\ell_p(G/O_{p'p}((G))+1\le k+1$.

    In the case when $p$ is odd, we proceed by induction on the smallest integer $k$ with the property that $k(p-2)\ge \pwc P$. If $k=1$, then the claim follows from Corollary \ref{cor:etap-2}. For the inductive step note that $\pwc(PO_{p'p}(G)/O_{p'p}(G))\le \pwc (P/\eta_{p-2} (P))$, and repeat the argument in the previous paragraph.
\end{proof}

\end{document}